\numberwithin{equation}{section}
\newtheorem{Theorem}{Theorem}[section]
\newtheorem{Corollary}[Theorem]{Corollary}
\newtheorem{Lemma}[Theorem]{Lemma}
\newtheorem{Proposition}[Theorem]{Proposition}
 { \theoremstyle{definition}
\newtheorem{Example}[Theorem]{Example} }
\begin{document}

\allowdisplaybreaks

\newcommand{\arXivNumber}{1512.06765}

\renewcommand{\PaperNumber}{060}

\FirstPageHeading

\ShortArticleName{Modular Form Representation for Periods of Hyperelliptic Integrals}

\ArticleName{Modular Form Representation\\ for Periods of Hyperelliptic Integrals}

\Author{Keno EILERS}

\AuthorNameForHeading{K.~Eilers}

\Address{Faculty of Mathematics, University of Oldenburg,\\
 Carl-von-Ossietzky-Str.~9-11, 26129 Oldenburg, Germany}
\Email{\href{mailto:keno.eilers@uni-oldenburg.de}{keno.eilers@uni-oldenburg.de}}

\ArticleDates{Received December 22, 2015, in f\/inal form June 17, 2016; Published online June 24, 2016}

\Abstract{To every hyperelliptic curve one can assign the periods of the integrals over the holomorphic and the
meromorphic dif\/ferentials. By comparing two representations of the so-called \emph{projective connection}
it is possible to reexpress the latter periods by the f\/irst. This leads to expressions including only the
curve's parameters~$\lambda_j$ and modular forms. By a change of basis of the meromorphic dif\/ferentials one
can further simplify this expression. We discuss the advantages of these explicitly given bases,
which we call \emph{Baker} and \emph{Klein basis}, respectively.}

\Keywords{periods of second kind dif\/ferentials; theta-constants; modular forms}

\Classification{14H42; 30F30}

\section{Introduction}
Expressions of periods of hyperelliptic integrals in terms of $\theta$-constants have a long history, star\-ting with Rosenhain (1851) and Thomae (1866). Such a representation of second-kind dif\/ferentials was a part of the program by Felix Klein (1886, 1888) of constructing Abelian functions in terms of multi-variable $\sigma$-functions. Leaving aside considerations of expressions for higher genera analogues of the elliptic periods $2\omega$ in terms of $\theta$-constants, in this note we focus on expressions for the higher genera periods $2\eta$, which in the Weierstrass theory is of the form
\begin{gather*}
\eta=- \frac{1}{12\omega} \left(\frac{\vartheta_2''(0)}{\vartheta_2(0)}+ \frac{\vartheta_3''(0)}{\vartheta_3(0)}+ \frac{\vartheta_4''(0)}{\vartheta_4(0)}\right).
\end{gather*}
We present below an analogue of such an expression for hyperelliptic curves.

This problem is interesting both for theoretical reasons and computational viewpoints for developing black box calculation methods for these quantities. In recent times, the $\sigma$-function has come into focus in many applications, e.g., in theoretical physics and mathematics (see, e.g.,~\cite{ehkkl11} and references therein).

\section{The method}\label{section2}
Consider a genus-$g$ hyperelliptic curve
$\mathcal{C}\colon f(x) -y^2=0$, with $f(x)=\sum\limits_{i=0}^{2g+2}\lambda_i\cdot x^i$, $\lambda_i\in\mathbb{C}$,
and a canonical homology basis $\left(\mathfrak{a}_1,\ldots,\mathfrak{a}_g,\mathfrak{b}_1,\ldots,\mathfrak{b}_g\right)$, $\mathfrak{a}_i\circ \mathfrak{a}_j=\varnothing$, $\mathfrak{b}_i\circ \mathfrak{b}_j=\varnothing$, $\mathfrak{a}_i\circ \mathfrak{b}_j=\delta_{i,j}$ (see Fig.~\ref{hyperelliptic}).
\begin{figure}[h]
	\centering
		\includegraphics[width=0.5\textwidth]{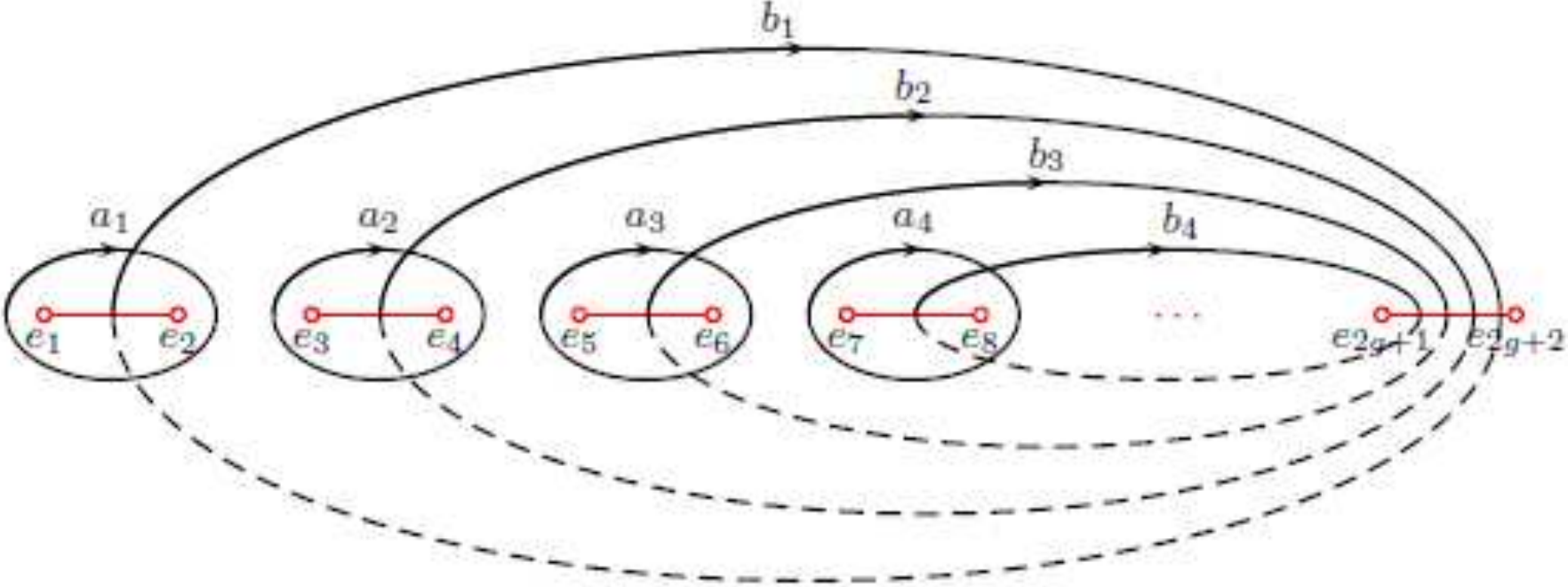}
	\caption{The canonical homology basis for a hyperelliptic curve.}
	\label{hyperelliptic}
\end{figure}
Then f\/ix a basis of $2g$ dif\/ferentials in the form
$\mathcal{B}=\frac1{y}\big(x^{i-1}\cdot {\mathrm d} x\big)_{1\leq i\leq2g}$, of which the f\/irst half are holomorphic and the second half are meromorphic. Though partly meromorphic, we refer to this basis as a cohomology basis. (The holomorphic part is in fact dual to the cycle basis.) This basis can be normalized in a form especially due to Baker~\cite{bak897}:
\begin{gather}
\mathcal{B'} =\frac1{y}\big(x^{i-1}\cdot {\mathrm d}x\big)_{1\leq i\leq g}\cup\frac1{4y}\left(\sum_{k=j}^{2g+1-j} (k+1-j )\lambda_{k+1+j} x^k\cdot {\mathrm d}x\right)_{1\leq j\leq g}\nonumber\\
\hphantom{\mathcal{B'}}{}
= (u_i )_{1\leq i\leq g}\cup (r_j )_{1\leq j\leq g},\label{Baker}
\end{gather}
of which again the f\/irst half are the (holomorphic) dif\/ferentials of the f\/irst kind and the second half are the (meromorphic) dif\/ferentials of the second kind. Def\/ining now $2\omega$, $2\omega'$ as the $\mathfrak{a}$- and $\mathfrak{b}$-periods of the integrals over the holomorphic dif\/ferentials, and $2\eta$, $2\eta'$ as the respective periods of the integrals over the meromorphic dif\/ferentials, these matrices fulf\/ill the generalized Legendre relation
\begin{gather*}
\eta^T\omega = \omega^T\eta,\qquad \eta^T\omega'-\omega^T \eta' =
\frac{\imath\pi}{2}, \qquad {\eta'}^T\omega'={\omega'}^T\eta',
\end{gather*}
which make this setup as a natural generalization of the theory of elliptic functions by Weierstrass. Analogously to the Weierstrass theory, we introduce
\begin{gather*}
\varkappa=\eta (2\omega )^{-1}, \qquad \varkappa^T=\varkappa.
\end{gather*}
Via $\varkappa$ we will calculate in the following the periods of second kind integrals
\begin{gather*}
\eta = 2\varkappa \omega, \qquad \eta' = 2\varkappa \omega' - \imath \pi {(2\omega)^T}^{-1}.
\end{gather*}
Using the Riemann period-matrix $\tau=\omega^{-1}\cdot\omega'$, we def\/ine the Riemann-$\theta$-function
\begin{gather*}
 \theta[\varepsilon](\boldsymbol{z};\tau)=\sum_{\boldsymbol{n}\in
 \mathbb{Z}^g}\exp \big\{ \imath\pi(\boldsymbol{n}+\boldsymbol{\varepsilon})^T\tau
 (\boldsymbol{n}+\boldsymbol{\varepsilon}) +2\imath\pi
 (\boldsymbol{n}+\boldsymbol{\varepsilon})^T (\boldsymbol{z}+\boldsymbol{\varepsilon'})\big\},
\end{gather*}
with half-integer characteristics \begin{gather*}
[\varepsilon]= \left[ \begin{matrix} \boldsymbol{\varepsilon}^T\\
 \boldsymbol{\varepsilon'}^T\end{matrix} \right] =
\left[\begin{matrix}
 \varepsilon_1&\ldots&\varepsilon_g \\
 \varepsilon_1'&\ldots&\varepsilon_g'
\end{matrix} \right].
\end{gather*}
We call a characteristic even (resp.~odd), if $4{\boldsymbol\varepsilon}^T{\boldsymbol\varepsilon}'\mod 2=0$ (resp.~1); the associated $\theta$-function inherits the parity of the characteristic. We also can def\/ine all $\theta$-constants: $\theta[\varepsilon]=\theta[\varepsilon](0)$, $\theta_{ij}[\varepsilon]=\frac{\partial^2}{\partial z_i\partial z_j}\theta[\varepsilon](z) \big|_{z=0}$ if $[\varepsilon]$ is even. Denote{\samepage
\begin{gather}N_g=\binom{2g+1}{g}, \label{ng} \end{gather} which is the number of non-singular even characteristics.}

We can clarify the role of the characteristics more explicitly by pointing out their connection to the Abelian images of the branching points $e_i$, which are in the hyperelliptic case the zeroes of~$f$. That is, there exists a characteristic $[\varepsilon_j]$, such that the following relation holds
\begin{gather*}
\boldsymbol{\mathfrak{A}}_j=\int_{R_0}^{(e_j,0)}{\boldsymbol u}=:2\omega{\boldsymbol \varepsilon_j}+2\omega'{\boldsymbol \varepsilon_j'},\qquad j=1,\ldots,2g+2.
\end{gather*}
We use the notation $[\varepsilon_j]=[\boldsymbol{\mathfrak{A}}_j]$, and hence the notion of even or odd branching points.
Now consider a partition of the branching points,
$\{i_1,\ldots,i_{g+1}\}\cup\{j_1,\ldots,j_{g+1}\}=\mathcal{I}_0\cup\mathcal{J}_0$ of $\{1,\ldots,$ $2g+2\}$.
To that partition corresponds a certain characteristic $[\varepsilon]$ by
\begin{gather}
\sum_{k=1}^{g+1}\boldsymbol{\mathfrak{A}}_{i_k}+\boldsymbol{K}_{R_0}=2\omega{\boldsymbol \varepsilon}+2\omega'{\boldsymbol \varepsilon'}, \label{characteristic}
\end{gather}
and that precise $[\varepsilon]$ is even and non-singular. $\boldsymbol{K}_{R_0}$ is the vector of Riemann constants for the base point $R_0$. The vector of Riemann constants of a hyperelliptic curve and with a branching point as the base point is the sum of all vectors $\boldsymbol{\mathfrak{A}}_j$, for which the respective characteristic $[\boldsymbol{\mathfrak{A}}_j]$ is odd (for further information, see \cite[p.~305]{fk980}).

As an example of these partitions, consider $g=2$ and f\/ix a base point $R_0=e_6$. Then $\left[\varepsilon_{45}\right]\equiv\left[\varepsilon_{456}\right]\equiv\left[\varepsilon_{123}\right]$ corresponds to the partition $\left\{1,2,3\right\}\cup\left\{4,5,6\right\}$. The last equivalence is due to Abel's theorem applied to the meromorphic function $y/(x-e_1)(x-e_2)(x-e_3)$: The divisors of zeros and poles, respectively, are linear equivalent and hence the Abel maps of these divisors coincide.

Now, we have the tools to introduce the bi-dif\/ferential $\Omega(P,Q)$ on $\mathcal{C}\times \mathcal{C}\ni (P,Q)$, which is called the {\em canonical bi-differential of the second kind} if it is
\begin{itemize}\itemsep=0pt
\item symmetric
$\Omega(P,Q)=\Omega(Q,P)$,
\item normalized at $\mathfrak{a}$-periods:
$ \oint_{\mathfrak{a}_k} \Omega(P,Q)=0$, $k=1,\ldots,g$,
\item and has the only pole of the second order along the
diagonal, namely it has the following expansion:
\begin{gather*}
 \Omega(P,Q) = \frac{{\mathrm d}\xi(P){\mathrm d}\xi(Q)} {(\xi(P)-\xi(Q))^2}+ \frac{1}{6}\mathfrak{S}(R) + \text{higher order terms},
\end{gather*}
where $\xi(P)$ and $\xi(Q)$ are local coordinates of points $P=(x,y)$ and $Q=(z,w)$ in the vicinity of a point $R, \xi(R)=0$ respectively.
\end{itemize}
The quantity $\mathfrak{S}(R)$ is called the {\it holomorphic projective connection} (see \cite[p.~19]{fay973}; note that this object is sometimes called Bergman projective connection, but here we adopt the notion of Fay). Our purpose is now to express $\mathfrak{S}(R)$ in two dif\/ferent ways, one containing $\varkappa$, equate them and solve for~$\varkappa$.

\section[Two representations of $\Omega(P,Q)$]{Two representations of $\boldsymbol{\Omega(P,Q)}$}\label{section3}
The canonical bi-dif\/ferential is uniquely def\/ined by the given conditions. But it has several representations, whose derivations are described in particular in~\cite{eee13}. We restrict this inspection to the {\it Fay-representation} and the {\it Klein--Weierstrass-representation}. Note that Fay is using a monic polynomial~$f$ (see \cite[p.~20]{fay973}), so in the following we set $\lambda_{2g+2}=1$. But this is no loss of generality, because we can rescale $f$ as necessary after calculating $\varkappa$, respectively all related objects. (In especially, for $f\rightarrow \lambda_{2g+2}\cdot f$ we have $\varkappa\rightarrow \lambda_{2g+2}\cdot\varkappa$, $\eta\rightarrow \sqrt{\lambda_{2g+2}}\cdot\eta$ and $\omega\rightarrow \omega/\sqrt{\lambda_{2g+2}}$.)

Introducing the normalized holomorphic dif\/ferentials ${\boldsymbol v}=(2\omega)^{-1}{\boldsymbol u}$ and non-singular odd characteristics $[\delta]$, we present two realizations
\begin{gather}
\Omega(P,Q) ={\mathrm d}_x {\mathrm d}_z \log\theta[\delta]\left(\int_Q^P{\boldsymbol v}\right)
=\frac{F(x,z)+2yw}{4(x-z)^2}\frac{{\mathrm d}x}{y} \frac{{\mathrm d}z}{w} + 2\sum_{i,j=1}^gu_i\varkappa_{ij}u_j.
\label{Bidifferential}
\end{gather}
The symmetric polynomial $F(x,z)$ is the Kleinian 2-polar,
\begin{gather*}F(x,z)=2\lambda_{2g+2}z^{g+1}x^{g+1}+\sum_{k=0}^gx^kz^k\big(2\lambda_{2k}+(x+z)\lambda_{2k+1}\big), \qquad F(x,x)=2f(x),\end{gather*}
which was introduced by Klein to represent the second kind bi-dif\/ferential in the above given form.

From the f\/irst and second expression of equation~\eqref{Bidifferential} we can derive two dif\/ferent representations, which we will cite in the following propositions:

\begin{Proposition}[\protect{\cite[p.~20]{fay973}}]
For a local coordinate $\xi(R)$ and for any non-singular even characteristic $[\varepsilon]$ corresponding to the partition $\{i_1,\ldots,i_{g+1}\}\cup\{j_1,\ldots,j_{g+1}\}=\mathcal{I}_0\cup\mathcal{J}_0$ of $\{1,\ldots,2g+2\}$, the projective connection is of the form
\begin{gather}
\mathfrak{S}_{\rm Fay}(R)= \{x,\xi(R) \}{\mathrm d}\xi^2+\frac{3}{8}\sum\limits_{\forall\, \mathcal{I}_0\cup \mathcal{J}_0} \left( \mathrm{d}\ln
\frac{\prod\limits_{ i\in \mathcal{I}_0} x-e_i}{\prod\limits_{j\in \mathcal{J}_0} x-e_j} \right)^2-6\sum_{i,j=1}^g\frac{\theta_{ij}[\varepsilon]}{\theta[\varepsilon]}v_i(R)v_j(R),
\label{fay}
\end{gather}
where $\left\{\cdot,\cdot\right\}$ is the Schwartzian derivative
\begin{gather*}\{x(\xi),\xi\} = {\frac{\mathrm{d}^3 x(\xi)
 /\mathrm{d}\xi^3}
{\mathrm{d} x(\xi) /\mathrm{d}\xi} } -
\frac32\left (\displaystyle{\frac{\mathrm{d}^2 x(\xi) /
 \mathrm{d}\xi^2} { \mathrm{d} x(\xi) /\mathrm{d}\xi}
 }\right)^2.
\end{gather*}
\end{Proposition}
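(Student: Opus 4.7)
The plan is to read off $\mathfrak{S}_{\rm Fay}(R)$ from the first realization of $\Omega$ in~\eqref{Bidifferential}, namely $\Omega(P,Q)=d_xd_z\log\theta[\delta]\big(\int_Q^P\boldsymbol v\big)$ with $[\delta]$ non-singular odd, by isolating the constant term of its diagonal expansion, which is $\tfrac16\mathfrak{S}(R)$ by definition.

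First I would fix the local coordinate $\xi$ at $R$ and Taylor-expand $\int_Q^P\boldsymbol v$ in powers of $(\xi(P)-\xi(Q))$, with coefficients built from $\boldsymbol v(R)/d\xi$ and its $\xi$-derivatives. Because $[\delta]$ is odd, $\theta[\delta](0)=0$, so $\log\theta[\delta]$ of this small argument is controlled by the linear and cubic Taylor data of $\theta[\delta]$. Applying $d_xd_z$ and subtracting the principal $(\xi(P)-\xi(Q))^{-2}$ singularity leaves a finite remainder. Re-expressing the $\xi$-derivatives in terms of the projective coordinate $x$ produces precisely the Schwartzian term $\{x,\xi(R)\}d\xi^2$, which accounts for the first summand of~\eqref{fay}.

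Next I would rewrite the residual $\theta[\delta]$-piece in the symmetric even-characteristic form. The key input is the hyperelliptic specialization of the trisecant identity (\cite{fay973}, Chap.~4), which expresses $\theta^2[\delta]\big(\int_Q^P\boldsymbol v\big)$ as an explicit rational function of $(x,y)$ and $(z,w)$ whose numerator is a difference of products $\prod_{\mathcal I_0}(x-e_i)(z-e_i)$ and $\prod_{\mathcal J_0}(x-e_j)(z-e_j)$. Taking $d_xd_z\log$ of that identity converts the odd-$[\delta]$ datum into the logarithmic $1$-forms $d\ln\big(\prod_{i\in\mathcal I_0}(x-e_i)/\prod_{j\in\mathcal J_0}(x-e_j)\big)$ that appear inside the partition sum, while the heat equation $2\pi i\,\partial_{\tau_{ij}}\theta=\partial_{z_iz_j}\theta$ identifies the remaining quadratic differential at $R$ with $\theta_{ij}[\varepsilon]v_i(R)v_j(R)/\theta[\varepsilon]$ after passing to an even auxiliary characteristic via \eqref{characteristic}. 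Averaging over the $N_g$ partitions produces the combinatorial factor $3/8$ and makes the auxiliary $[\delta]$ drop out.

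The main obstacle is precisely this combinatorial and modular bookkeeping: one must verify that the average over all $\mathcal I_0\cup\mathcal J_0$ not only eliminates the dependence on the chosen odd $[\delta]$ but also produces exactly the coefficient $3/8$ and leaves the $-6\theta_{ij}[\varepsilon]/\theta[\varepsilon]\,v_iv_j$ term with its sign and factor intact. A cleaner way to finish, avoiding direct combinatorics, is to note that both sides of~\eqref{fay} are holomorphic quadratic differentials in $R$ on $\mathcal C$ with the same modular transformation law and the same local behaviour at the $2g+2$ branch points; the space of such differentials being finite-dimensional, agreement on these data forces equality.
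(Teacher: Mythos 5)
The paper itself offers no proof of this Proposition: it is imported verbatim from Fay \cite[p.~20]{fay973}, and the relevant machinery only surfaces later, in Section~\ref{section6}, where Fay's Corollary~2.12 and the algebraic Szeg\"o kernel are used for a different purpose. Judged on its own, your sketch has the right overall shape (diagonal expansion of $\Omega$, Schwarzian from the change of local coordinate, an algebraic theta identity special to hyperelliptic curves), but it breaks precisely at the step you flag as the main obstacle. The coefficient $3/8$ does \emph{not} arise from averaging over the $N_g$ partitions, and no such averaging occurs: the identity holds partition by partition, with the even characteristic $[\varepsilon]$ of \eqref{characteristic} attached to the \emph{same} partition that appears in the logarithmic term (indeed the paper's later use of \eqref{fay} in deriving \eqref{rational} only balances if \eqref{fay} is read per partition). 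The clean derivation does not go through the odd characteristic $[\delta]$ and a ``difference of products'' trisecant identity --- the identity you invoke is not a correct statement --- but through \eqref{Cor2.12} combined with the algebraic Szeg\"o kernel for the even non-singular half-period: writing $h(x)=\prod_{i\in\mathcal I_0}(x-e_i)$ and $\tilde h(x)=\prod_{j\in\mathcal J_0}(x-e_j)$ one finds
\begin{gather*}
R(P,Q|\varepsilon)^2=\frac{\mathrm dx\,\mathrm dz}{4(x-z)^2}\bigl(2+2\cosh(L(x)-L(z))\bigr),\qquad L=\tfrac12\ln(h/\tilde h),
\end{gather*}
whose finite part on the diagonal is $\tfrac1{16}\bigl(\mathrm d\ln(h/\tilde h)\bigr)^2$; multiplying by $6$ (the finite part of $\Omega$ being $\tfrac16\mathfrak S$) produces exactly $3/8$, while the last term is simply $\partial^2\log\theta[\varepsilon](0)$ contracted with $v\otimes v$, since first derivatives of an even theta vanish at the origin --- no heat equation is needed.

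Your fallback argument does not close the gap either. The difference of two projective connections is a holomorphic quadratic differential, living in a space of dimension $3g-3$; matching local behaviour at the $2g+2$ branch points imposes too few conditions once $g\ge 6$, and ``same modular transformation law'' is not something you can invoke for the candidate right-hand side without proving it, which is essentially the content of the statement itself. So as written the proposal neither produces the coefficient $3/8$ nor justifies the passage from the odd $[\delta]$ to the even $[\varepsilon]$, and the dimension count cannot rescue it.
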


The next result is taken from \cite[p.~307]{eee13}, where the projective connection was constructed for a larger class of curves:
\begin{Proposition}[\protect{\cite[p.~307]{eee13}}]
Write the curve $\mathcal{C}$ as $y^2=f(x)$, so that a prime $'$ means differentiation with respect to~$x$. Then
\begin{gather}
\mathfrak{S}_{\rm KW}(R)=\left\{x,\xi(R)\right\}{\mathrm d}\xi^2-\frac{3}{2y}y''{\mathrm d}x^2+6{\boldsymbol r}^T{\boldsymbol u} +12{\boldsymbol u}^T\varkappa{\boldsymbol u},\label{kw}
\end{gather}
all differentials evaluated at the point $R=(x,y)$.
\end{Proposition}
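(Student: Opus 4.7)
The plan is to expand the Klein--Weierstrass expression on the right-hand side of \eqref{Bidifferential} as $Q\to P$ in a local coordinate $\xi$ centred at $R$, and to read off the finite part of the resulting Laurent expansion; by the defining expansion of $\Omega(P,Q)$ this finite part equals $\frac{1}{6}\mathfrak{S}_{\rm KW}(R)$, so multiplying by $6$ produces $\mathfrak{S}_{\rm KW}$. The contribution of the bilinear summand $2\boldsymbol{u}(P)^T\varkappa\boldsymbol{u}(Q)$ is immediate: it is regular on $\mathcal{C}\times\mathcal{C}$, its value at $P=Q=R$ is $2\boldsymbol{u}(R)^T\varkappa\boldsymbol{u}(R)$, and times $6$ this yields the term $12\boldsymbol{u}^T\varkappa\boldsymbol{u}$ of \eqref{kw}.

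For the rational summand I would use the identity $F(x,x)=2f(x)=2y^2$ to split off the pole,
\begin{gather*}
\frac{F(x,z)+2yw}{4(x-z)^2\,yw}\,\mathrm{d}x\,\mathrm{d}z=\frac{\mathrm{d}x\,\mathrm{d}z}{(x-z)^2}+\frac{F(x,z)-2yw}{4(x-z)^2\,yw}\,\mathrm{d}x\,\mathrm{d}z.
\end{gather*}
The first piece, converted to the coordinate $\xi$ via the standard Taylor identity
\begin{gather*}
\frac{\mathrm{d}x\,\mathrm{d}z}{(x-z)^2}=\frac{\mathrm{d}\xi(P)\mathrm{d}\xi(Q)}{(\xi(P)-\xi(Q))^2}+\frac{1}{6}\{x,\xi(R)\}\mathrm{d}\xi^2+O\big((\xi(P)-\xi(Q))^2\big),
\end{gather*}
supplies both the prescribed singular kernel of $\Omega(P,Q)$ and, after multiplication by $6$, the Schwarzian term of \eqref{kw}. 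The second piece is regular on the diagonal because the symmetry of $F$ gives $F_z(x,x)=f'(x)$, cancelling the $O(\epsilon)$ part of $2yw$ with $\epsilon=z-x$; expanding $F(x,z)$ and $w=\sqrt{f(z)}$ to $O(\epsilon^2)$ and using $y''=f''/(2y)-f'^2/(4y^3)$ shows that the diagonal value of this piece is $\frac{F_{zz}(x,x)}{8y^2}\,\mathrm{d}x^2-\frac{y''}{4y}\,\mathrm{d}x^2$, whose $y''$ part, times $6$, gives exactly $-\frac{3y''}{2y}\mathrm{d}x^2$.

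The main obstacle is then to identify the remaining $\frac{3F_{zz}(x,x)}{4y^2}\mathrm{d}x^2$ with $6\boldsymbol{r}^T\boldsymbol{u}$, which amounts to the polynomial identity
\begin{gather*}
F_{zz}(x,x)=2\sum_{j=1}^g\sum_{k=j}^{2g+1-j}(k+1-j)\lambda_{k+1+j}x^{k+j-1}.
\end{gather*}
This is a direct if combinatorially delicate verification from the explicit form of the Klein $2$-polar $F(x,z)$, and it is the conceptual point of the proposition: it reveals that the meromorphic half of the Baker basis \eqref{Baker} is by construction the regular part of the Klein representation on the diagonal, which is exactly why $(r_j)$ is the natural partner of $(u_i)$ in the Klein--Weierstrass form of $\Omega$.
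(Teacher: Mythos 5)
Your argument is correct. Note that the paper itself offers no proof of this Proposition---it is imported verbatim from \cite{eee13}---so there is no internal argument to compare against; what you have written is a legitimate self-contained derivation of \eqref{kw} from the Klein realization of $\Omega(P,Q)$ already displayed in \eqref{Bidifferential}, which is indeed the natural route. All the individual steps check out: the splitting $\frac{F+2yw}{4(x-z)^2yw}=\frac{1}{(x-z)^2}+\frac{F-2yw}{4(x-z)^2yw}$, the regularity of the second piece on the diagonal (from $F(x,x)=2f$ and $F_z(x,x)=f'$), the diagonal value $\bigl(\tfrac{1}{8y^2}F_{zz}(x,x)-\tfrac{1}{4y}y''\bigr)\mathrm{d}x^2$, and the Schwarzian correction coming from rewriting $\frac{\mathrm{d}x\,\mathrm{d}z}{(x-z)^2}$ in the coordinate $\xi$. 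The one computation you defer---the polynomial identity
\begin{gather*}
F_{zz}(x,x)=2\sum_{j=1}^g\sum_{k=j}^{2g+1-j}(k+1-j)\lambda_{k+1+j}x^{k+j-1}
\end{gather*}
---does hold: differentiating the $2$-polar twice in $z$ and setting $z=x$ gives the coefficient $2k(k-1)$ for $\lambda_{2k}x^{2k-2}$ and $2k^2$ for $\lambda_{2k+1}x^{2k-1}$, which agrees with the double sum after the substitution $m=k+j+1$ (the inner sums evaluate to $k(k-1)$ and $k^2$ respectively). Since you state this identity without carrying it out, and it is the one place where the specific normalization of the Baker differentials $r_j$ in \eqref{Baker} enters, it would strengthen the write-up to include that two-line verification; otherwise the proof is complete, modulo the standing assumption (implicit in the statement as well) that $x$ serves as a local parameter at $R$, i.e., that $R$ is not a branch point.
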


As stated above, $\mathfrak{S}_{Fay}$ and $\mathfrak{S}_{KW}$ coincide, so we can solve linearly for $\varkappa$. We see that the Schwartzian derivatives cancel and as can be seen in particular in the term ${\boldsymbol u}^T\varkappa{\boldsymbol u}$, only entries of $\varkappa$ along the anti-diagonals share the same order of $x$. Therefore we will solve order by order and for a f\/irst insight we will do so by expanding $x$ in terms of a local coordinate $\xi$.

\section{The results}\label{section4}
Consider a partition $\mathcal{I}_0\cup\mathcal{J}_0=\{1,\ldots,2g+2\}$, and denote with $S_j(\mathcal{I})$, $j=0,\ldots, g+1$ the elementary symmetric function of order $j$ built in the branching points $e_i$ with indices taken from $\mathcal{I}$, namely $S_1(\mathcal{I})= \sum\limits_{i\in \mathcal{I}} e_{i}$, $S_2(\mathcal{I})= \sum\limits_{i,k\in \mathcal{I};\, i\leq k} e_{i}e_k$, etc.

\begin{Example}
For any even $g=2$ hyperelliptic curve, $\varkappa$ is given as
\begin{gather}
\varkappa= \frac{1}{8}\left(
\begin{matrix}
S_3(\mathcal{I}_0)S_1(\mathcal{J}_0)+S_3(\mathcal{J}_0)S_1(\mathcal{I}_0)
& -S_3(\mathcal{I}_0)-S_3(\mathcal{J}_0)\\
-S_3(\mathcal{I}_0)-S_3(\mathcal{J}_0)
&S_2(\mathcal{I}_0)+	S_2(\mathcal{J}_0)
\end{matrix}\right)\nonumber\\
\hphantom{\varkappa=}{} -\frac{1}{2}(2\omega)^{-1^T}\frac{\left(
\begin{matrix}
\theta_{11}[\varepsilon]&\theta_{12}[\varepsilon]\\
\theta_{12}[\varepsilon]&\theta_{22}[\varepsilon]
\end{matrix}\right)
}{\theta[\varepsilon]}(2\omega)^{-1},\label{example2}
\end{gather}
with $\varepsilon$ according to the partition $\mathcal{J}_0= \{i,j,6 \}$. We observe that $\varkappa$ splits nicely into a transcendental part consisting of various $\theta$-constants, and a rational part, which will be inspected more closely below.
\end{Example}
After def\/ining the column-vectors $\left(\boldsymbol{V}_1,\ldots,\boldsymbol{V}_g\right)=(2\omega)^{-1}$ we use a shorter notation by setting
\begin{gather*}
 \partial_{\boldsymbol{V_k}} \theta[\varepsilon]\equiv \sum_{l=1}^g V_{k,l}\frac{\partial}{ \partial z_l}
\theta[\varepsilon](\boldsymbol{z})\vert_{\boldsymbol{z}=\boldsymbol{0}} =\Theta_k[\varepsilon],\\
 \partial_{\boldsymbol{V_k}\boldsymbol{V_j}} \theta[\varepsilon]\equiv \sum_{l,m=1}^g V_{k,l}V_{j,m}\frac{\partial^2}
{ \partial z_l \partial z_m} \theta[\varepsilon](\boldsymbol{z})\vert_{\boldsymbol{z}=\boldsymbol{0}}=\Theta_{k,j}[\varepsilon],\\
 \partial_{\boldsymbol{V_k}\boldsymbol{V_j}\boldsymbol{V_i}} \theta[\varepsilon]\equiv \sum_{l,m,n=1}^g V_{k,l}V_{j,m}V_{i,n}\frac{\partial^3}
{ \partial z_l \partial z_m \partial z_n} \theta[\varepsilon](\boldsymbol{z})\vert_{\boldsymbol{z}=\boldsymbol{0}}=\Theta_{k,j,i}[\varepsilon],\qquad
 \text{etc.},
\end{gather*}
and also $\theta[\varepsilon]=\Theta[\varepsilon]$.

To avoid establishing the correspondence between branching points and characteristics in the given homology basis, we sum over all $\binom{2g+1}{g}$ possible partitions. The calculation time may grow, but as a result $\varkappa$ is expressible in terms of the parameters $\lambda_j$ of the curve:
\begin{gather}
\varkappa=\frac{1}{8}\frac{1}{10}\left(
\begin{matrix}
4\lambda_2&\lambda_3\\\lambda_3&4\lambda_4	
\end{matrix}
\right)-\frac{1}{2}\frac{1}{10}\sum_{10 [\varepsilon]}\frac{\left(
\begin{matrix}
\Theta_{11}[\varepsilon]&\Theta_{12}[\varepsilon]\\
\Theta_{12}[\varepsilon]&\Theta_{22}[\varepsilon]
\end{matrix}\right)}{\Theta[\varepsilon]}.\label{kappa2}
\end{gather}
The same technique is applicable for $g=3$, but with the dif\/ference, that the elements of the anti-diagonal share the same order in $\xi$. So at f\/irst, we only can derive the sum of the diagonal:
\begin{Example}
For any even $g=3$ hyperelliptic curve, $\varkappa$ is given as
\begin{gather}
\varkappa=\frac{1}{8}\left(\begin{matrix}
S_4(\mathcal{I}_0)S_2(\mathcal{J}_0){+}S_4(\mathcal{J}_0)S_2(\mathcal{I}_0)& -S_4(\mathcal{I}_0)S_1(\mathcal{J}_0){}-S_4(\mathcal{J}_0)S_1(\mathcal{I}_0) & X\\
-S_4(\mathcal{I}_0)S_1(\mathcal{J}_0){-}S_4(\mathcal{J}_0)S_1(\mathcal{I}_0) & Y & -S_3(\mathcal{I}_0){-}S_3(\mathcal{J}_0)\\
X & -S_3(\mathcal{I}_0){-}S_3(\mathcal{J}_0) & S_2(\mathcal{I}_0){+}S_2(\mathcal{J}_0)
\end{matrix}\right) \nonumber\\
\hphantom{\varkappa=}{}
-\frac1{2}\frac{\left(
\begin{matrix}
\Theta_{11}[\varepsilon]&\Theta_{12}[\varepsilon]&\Theta_{13}[\varepsilon]\\
\Theta_{12}[\varepsilon]&\Theta_{22}[\varepsilon]&\Theta_{22}[\varepsilon]\\
\Theta_{13}[\varepsilon]&\Theta_{23}[\varepsilon]&\Theta_{33}[\varepsilon]
\end{matrix}\right)}{\Theta[\varepsilon]},\label{example3}
\end{gather}
with $2X+Y=S_3(\mathcal{I}_0)S_1(\mathcal{J}_0)+S_3(\mathcal{J}_0)S_1(\mathcal{I}_0)$.
$[\varepsilon]$ here corresponds to the partition $\mathcal{J}_0= \{i,j,k,8 \}$.
\end{Example}

Again, we can sum over all allowed $[\varepsilon]$. If we do so, we f\/ind that the symmetric functions $2X+Y$ will sum to $20\lambda_4$. Examining dif\/ferent (even, $g=3$ and hyperelliptic) curves numerically, it is reasonable to assume that each entry depends linearly on a single $\lambda_i$, i.e., there are no additional constants present. (Please note that this is at this point only an assumption, but the sum of $20\lambda_4$ forbids many other possibilities.) This means that the sum over the entries~$X$ or~$Y$ are multiples of $\lambda_4$. If so, the respective prefactors of $\lambda_4$ for the~$X$- and~$Y$-sums are independent of the specif\/ic curve (of this type). Hence we f\/ind the right separation of $20\lambda_4$ by a numerical investigation of a concrete curve. That way we get up to numerical accuracy integer numbers
\begin{align}
\varkappa=\frac1{8}\frac1{35}\left(
\begin{array}{ccc}
15\lambda_2&5\lambda_3&\lambda_4\\
5\lambda_3&18\lambda_4&5\lambda_5\\
\lambda_4&5\lambda_5&15\lambda_6
\end{array}
\right)-\frac1{2}\frac1{35}\sum_{35 [\varepsilon]}\frac{\left(\Theta_{ab}[\varepsilon]\right)_{ a,b=1,\ldots,3}}{\Theta[\varepsilon]}.
\label{kappa3}
\end{align}
In especially the two ``partition numbers'' at $X$ and $Y$ are $1$ and $18$. This summed version of~$\varkappa$ relies on two numerically justif\/ied assumptions (linearity in~$\lambda_i$ and integer solutions). The examination of higher genera will reveal a number pattern, which adds further indication to that structure. But a full prove will be given below in Section~\ref{section6}, where we develop a method to disentangle these partition numbers. For that we f\/irst need some statements about the structure of~$\varkappa$ for arbitrary genus.

\section{The general pattern}\label{section5}
We want to generalize the representation of $\varkappa$ as it is in equations~\eqref{kappa2} and~\eqref{kappa3} to arbitrary genus. From the above examples follows the general pattern for the $\varkappa$ matrix: Let $\mathcal{I}_0=\{i_1,\ldots,i_{g+1}\}$ be a selection of $2g+2$ elements $1,\ldots,2g+2$ and $[\varepsilon]$ the associated non-singular even characteristic~\eqref{characteristic}. Then the following Ansatz is suggested for arbitrary genus~$g$
\begin{gather}
\varkappa=\frac1{8}\Lambda_g^{[\varepsilon]}
-\frac12\frac{(\Theta_{ab}[\varepsilon])_{a,b=1,\ldots,g}}{\Theta[\varepsilon]}. \label{kappaAnsatz}
\end{gather}
Summed over all $N_g$ non-singular even characteristics (given in \eqref{ng}) and divided by their number we get symmetric expressions independent of the special characteristic and f\/ixed homology basis. In that sense, the following theorem mainly states the splitting of $\varkappa$ into a modular part and a ``residual'' matrix~$\Lambda_g$, which we will get rid of in the last section.

\begin{Theorem}\label{maintheorem}
Let $\mathcal{C}$ be a hyperelliptic curve of genus $g$ and introduce Baker's $2g$-dimensional basis for the singular cohomology $($equation~\eqref{Baker}$)$. Let $2\omega$, $2\omega'$, $2\eta$, $2\eta'$ be period $g\times g$ matrices satisfying the generalized Legendre relation. Then $\varkappa=\eta(2\omega)^{-1}$ is of the form
\begin{gather}
\varkappa=\frac1{8}\frac1{N_g}\Lambda_g
-\frac1{2}\frac1{N_g}\sum_{N_g [\varepsilon]}\frac{(\Theta_{ab}[\varepsilon])_{a,b=1,\ldots,g}}{\Theta[\varepsilon]}, \label{kappaAnsatz2}
\end{gather}
with
\begin{gather*} \Lambda_g= \sum_{N_g [\varepsilon]} \Lambda_g^{[\varepsilon]}. \end{gather*}
\end{Theorem}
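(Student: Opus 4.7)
The plan is to derive the representation by equating the two expressions for the projective connection: $\mathfrak{S}_{\rm Fay}(R)$ of~\eqref{fay} and $\mathfrak{S}_{\rm KW}(R)$ of~\eqref{kw}. Since the bi-differential $\Omega(P,Q)$ is uniquely fixed by its three defining conditions, both representations in~\eqref{Bidifferential} produce the same object, hence $\mathfrak{S}_{\rm Fay}=\mathfrak{S}_{\rm KW}$. The Schwartzian derivatives $\{x,\xi(R)\}\,\mathrm d\xi^2$ cancel on the nose, and after rewriting $v_i=\sum_k(2\omega)^{-1}_{ik}u_k$ on the Fay side, the theta derivatives repackage into the compact form $\Theta_{ab}[\varepsilon]/\Theta[\varepsilon]$ introduced above.

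Next I would expand both sides at a generic point $R=(x,y)$ using the local coordinate $\xi$, so that $u_i(R)=x^{i-1}\mathrm dx/y$. After extracting the common factor $\mathrm dx^2/y^2$, both sides become polynomials in $x$, and I match coefficients order by order. The quadratic form $12\,\boldsymbol u^T\varkappa\boldsymbol u$ on the Klein--Weierstrass side contributes, at order $x^k$, the anti-diagonal sum $\sum_{i+j=k+2}\varkappa_{ij}$. The remaining terms $-\tfrac{3}{2}y''/y$ and $6\boldsymbol r^T\boldsymbol u$ from $\mathfrak{S}_{\rm KW}$ evaluate, using $y^2=f$ and the explicit form of Baker's basis~\eqref{Baker}, to rational expressions in the $\lambda_j$; the Fay-side logarithmic-derivative-squared term evaluates to symmetric polynomials in the branching points $e_i$ partitioned by $\mathcal I_0$ and $\mathcal J_0$. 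Matching coefficients yields the per-characteristic Ansatz~\eqref{kappaAnsatz}, with a rational matrix $\Lambda_g^{[\varepsilon]}$ whose entries are built from the symmetric functions $S_k(\mathcal I_0)$, $S_k(\mathcal J_0)$, exactly as observed in the $g=2,3$ examples~\eqref{example2} and~\eqref{example3}.

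Finally, since $\varkappa$ itself does not depend on the choice of $[\varepsilon]$, averaging~\eqref{kappaAnsatz} over all $N_g$ non-singular even characteristics produces~\eqref{kappaAnsatz2} with $\Lambda_g=\sum_{[\varepsilon]}\Lambda_g^{[\varepsilon]}$; the symmetry of this summed expression in the branching points then also makes it independent of the specific homology basis. The main obstacle, already visible in the $g=3$ example, is that several entries of $\varkappa$ on the same anti-diagonal share the same order in $\xi$, so the coefficient-matching above only pins down anti-diagonal sums of $\Lambda_g^{[\varepsilon]}$ rather than its individual entries. Upgrading the quadratic-form identity $\boldsymbol u^T\varkappa\boldsymbol u=\boldsymbol u^T(\text{RHS})\boldsymbol u$ to a genuine matrix identity — equivalently, disentangling the ``partition numbers'' — requires the additional machinery developed in Section~\ref{section6}, which resolves the ambiguity by working with a modified cohomology basis.
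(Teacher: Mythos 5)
Your proposal follows essentially the same route as the paper: equating the Fay and Klein--Weierstrass forms of the projective connection, cancelling the Schwartzians, matching orders of $x$, and averaging over the $N_g$ non-singular even characteristics, with the same honest caveat that the single-point quadratic form only pins down anti-diagonal sums of $\Lambda_g^{[\varepsilon]}$. One peripheral slip: Section~\ref{section6} disentangles the individual entries via Fay's Szeg\"o-kernel identity evaluated at two distinct points, not via a modified cohomology basis (that is the subject of Section~\ref{section7}).
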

We conjecture that the matrix $\Lambda_g$ consists of the parameters $\lambda_j$ of the curve, together with integer coef\/f\/icients, the ``partition numbers'', as it was the case in genus~$2$ and (up to numerical uncertainty) in genus~$3$.

An anonymous referee pointed out a way to straighten the claims about $\Lambda_g$. We give his idea in form of a lemma:
\begin{Lemma}\label{reflemma}
The before defined matrix $\Lambda_g$ is expressible as
\begin{gather}
y^2{\boldsymbol u}^T\Lambda_g{\boldsymbol u}=\frac{N_g}{4g+2}\sum_{k=2}^{2g}\left(\frac{1}{2}k (2g+2-k )+\frac{1}{4} (2g+1 )\big( (-1 )^k-1\big)\right)\lambda_kx^{k-2}\mathrm{d}x^2.\label{refrelation}
\end{gather}
\end{Lemma}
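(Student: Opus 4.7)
My plan is to feed the Ansatz \eqref{kappaAnsatz} into the identity $\mathfrak{S}_{\rm KW}=\mathfrak{S}_{\rm Fay}$, sum the resulting scalar equation over all $N_g$ non-singular even characteristics, and then multiply by $y^2=f(x)$ to produce the polynomial identity \eqref{refrelation}.

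Concretely, I would substitute $\varkappa=\tfrac{1}{8}\Lambda_g^{[\varepsilon]}-\tfrac{1}{2}(\Theta_{ab}[\varepsilon])/\Theta[\varepsilon]$ into the term $12\boldsymbol{u}^T\varkappa\boldsymbol{u}$ of \eqref{kw}. Equating with \eqref{fay} causes the Schwartzian and the $(\Theta_{ab}[\varepsilon])/\Theta[\varepsilon]$ contributions to cancel, leaving the $[\varepsilon]$-dependent scalar identity
\[
\boldsymbol{u}^T\Lambda_g^{[\varepsilon]}\boldsymbol{u}=\frac{y''}{y}\mathrm{d}x^2-4\boldsymbol{r}^T\boldsymbol{u}+\frac{1}{4}\left(\mathrm{d}\ln\frac{\prod_{i\in\mathcal{I}_0}(x-e_i)}{\prod_{j\in\mathcal{J}_0}(x-e_j)}\right)^{\!2},
\]
where $\mathcal{I}_0\cup\mathcal{J}_0$ is the partition attached to $[\varepsilon]$ via \eqref{characteristic}. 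Summing over all $N_g$ characteristics, only the last term on the right depends on $[\varepsilon]$, and multiplying through by $y^2=f(x)$ (with $\lambda_{2g+2}=1$; the general case follows by rescaling) then yields three explicit pieces to evaluate.

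The first two pieces are straightforward. Twice-differentiating $y^2=f$ gives $yy''=f''/2-(f')^2/(4f)$, and substituting the definition \eqref{Baker} of $r_j$ and re-indexing by $m=k+1+j$ produces $4y^2\boldsymbol{r}^T\boldsymbol{u}=\sum_{m=3}^{2g+2}s(m)\lambda_m x^{m-2}\mathrm{d}x^2$ with $s(m)=\tfrac{m(m-2)}{4}+\tfrac{1-(-1)^m}{8}$. The third piece, the partition sum, is the main algebraic step: writing $A_p=1/(x-e_p)$ and expanding the square, I would count unordered partitions of $\{1,\ldots,2g+2\}$ into two $(g+1)$-subsets with a fixed pair $\{p,q\}$ in the same part ($\binom{2g}{g-1}$ such) versus opposite parts ($\binom{2g}{g}$ such), use the identity $\binom{2g}{g-1}-\binom{2g}{g}=-N_g/(2g+1)$, and collapse the resulting elementary symmetric sums via $\sum_p A_p=f'/f$ and $\sum_{p<q}A_pA_q=f''/(2f)$. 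The partition contribution to $y^2\boldsymbol{u}^T\Lambda_g\boldsymbol{u}$ then becomes $\tfrac{N_g}{4}\bigl((f')^2/f-(2g+2)f''/(2g+1)\bigr)\mathrm{d}x^2$.

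Assembling the three pieces, the $(f')^2/f$ contributions from $yy''$ and from the partition sum cancel, yielding
\[
y^2\boldsymbol{u}^T\Lambda_g\boldsymbol{u}=N_g\left[\frac{g\,f''(x)}{2(2g+1)}-\sum_{m=3}^{2g+2}s(m)\lambda_m x^{m-2}\right]\mathrm{d}x^2.
\]
Expanding $f''=\sum_k k(k-1)\lambda_k x^{k-2}$ and comparing coefficients of $\lambda_k x^{k-2}$, I would verify that the coefficients for $k=2g+1,2g+2$ vanish identically (because $s(2g+1)=g^2$ and $s(2g+2)=g(g+1)$ precisely match $\tfrac{gk(k-1)}{2(2g+1)}$) and that for $k\in\{2,\ldots,2g\}$ the coefficient simplifies to $\tfrac{N_g}{4g+2}\bigl[\tfrac{1}{2}k(2g+2-k)+\tfrac{1}{4}(2g+1)((-1)^k-1)\bigr]$, establishing \eqref{refrelation}. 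The principal obstacle is the combinatorial counting in the partition sum; once that is in hand, the remaining work is routine polynomial bookkeeping.
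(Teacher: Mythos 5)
Your proposal is correct and follows essentially the same route as the paper's proof: equate $\mathfrak{S}_{\rm KW}$ with $\mathfrak{S}_{\rm Fay}$, sum over the $N_g$ characteristics, reduce the partition sum and $y''/y$ to rational expressions in $f$, $f'$, $f''$ so that the $(f')^2/f$ terms cancel, combine with the explicit expansion of $\boldsymbol{r}^T\boldsymbol{u}$, and check that the $k=2g+1,2g+2$ coefficients vanish. The only difference is that you supply the combinatorial derivation (via the count $\binom{2g}{g-1}-\binom{2g}{g}=-N_g/(2g+1)$) of the partition-sum identities that the paper simply quotes as elementary.
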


Def\/ining the vector $\boldsymbol{X} = \big(1,x,\ldots, x^{g-1} \big)^T$ changes the left-hand side of equation~\eqref{refrelation} to $\boldsymbol{X}^T \Lambda_g\boldsymbol{ X} \mathrm{d}x^2$ and so it is clear, that the dif\/ferentials cancel.
\begin{proof}
We can combine the proof of Lemma~\ref{reflemma} and Theorem~\ref{maintheorem}.
From the comparison of equations~\eqref{fay} and~\eqref{kw} it is clear, that $\Lambda_g$ includes the following terms
\begin{gather}
 \boldsymbol{u}^T \Lambda_g \boldsymbol{u}= N_g\left(\frac{y''}{y}\mathrm{d}x^2-4 \boldsymbol{r}^T\boldsymbol{u} \right)
 +\frac14
\sum_{\forall\, \mathcal{I}_0\cup \mathcal{J}_0} \left( \mathrm{d}\ln
\frac{\prod\limits_{ i\in \mathcal{I}_0} x-e_i}{\prod\limits_{j\in \mathcal{J}_0} x-e_j} \right)^2.
\label{rational}
\end{gather}
Using the elementary identities
\begin{gather*}
 \sum_{\forall\, \mathcal{I}_0\cup \mathcal{J}_0}\left( \sum_{i\in \mathcal{I}_0}\frac{1}{x-e_i} \right)=
N_g\frac{f'(x)}{2 f(x)},\\
 \sum_{\forall\, \mathcal{I}_0\cup \mathcal{J}_0}\left( \sum_{i\in \mathcal{I}_0}\frac{1}{x-e_i} \right)^2=
N_g\frac{(2g+1)f'(x)^2-(g+1)f(x)f''(x)}{(4g+2) f(x)^2},
\end{gather*}
and the equality
\begin{gather*}
\frac{\mathrm{d}}{ \mathrm{d} x}\ln \frac{\prod\limits_{ i\in \mathcal{I}_0} x-e_i}{\prod\limits_{j\in \mathcal{J}_0} x-e_j} = \sum_{i\in \mathcal{I}_0}\frac{2}{x-e_i}-\frac{f'(x)}{f(x)},
\end{gather*}
we f\/ind after summation over all partitions $\mathcal{I}_0\cup \mathcal{J}_0$,
\begin{gather*}
\sum_{\forall\, \mathcal{I}_0\cup \mathcal{J}_0} \!\!\left( \mathrm{d} \ln
\frac{\prod\limits_{ i\in \mathcal{I}_0} \!x-e_i}{\prod\limits_{j\in \mathcal{J}_0}\! x-e_j} \right)^2\! = 4 N_g\frac{(2g+1)f'(x)^2-(g+1)f(x)f''(x)}{(4g+2) f(x)^2}\mathrm{d}x^2 - N_g\left( \frac{f'(x)}{f(x)}\right)^2\!\mathrm{d}x^2.
\end{gather*}
Furthermore, it is clear that
\begin{gather*}
 \frac{ y''}{ y } = \frac{2 f(x) f''(x) - f'(x)^2 }{4f(x)^2}.
\end{gather*}
Plugging these parts into the right-hand side of \eqref{rational} we get the expression
\begin{gather*}
\frac{N_g}{f(x)^2} \left\{ \frac{(2g+1) f'(x)^2 - (g+1)f(x)f''(x) }{ (4g+2)}\mathrm{d}x^2
+ \frac12 (f(x)f''(x) - f'(x)^2)\mathrm{d}x^2 \right\} - 4N_g \boldsymbol{r}^T\boldsymbol{u}.
\end{gather*}
Because all of the $f'(x)$ and most of the $f(x)$ cancel, this expression shrinks to
\begin{gather}
 \boldsymbol{u}^T \Lambda_g \boldsymbol{u}=\frac{N_g}{(4g+2) f(x)}gf''(x)\mathrm{d}x^2-4N_g\boldsymbol{r}^T\boldsymbol{u}.
\label{cancellation}
\end{gather}
Next, we investigate the term \begin{gather*}\boldsymbol{r}^T\boldsymbol{u}=\frac1{4}\sum_{i=1}^g\left(\sum_{k=i}^{2g+1-i} (k+1-i )\lambda_{k+1+i}x^{k+i-1}\right)\frac{\mathrm{d}x^2}{f(x)}.\end{gather*} Further inspection of this double-sum leads to
\begin{gather}
\boldsymbol{r}^T\boldsymbol{u}=\frac{1}{4}\sum_{k=2}^{2g+2}
\left(\frac1{4} (k-1 )^2+\frac1{8}\big( (-1 )^{k-1}-1\big)\right)
\lambda_k x^{k-2} \frac{\mathrm{d}x^2}{f(x)}.\label{rtu}
\end{gather}
Plugging equation~\eqref{rtu} into equation~\eqref{cancellation} and multiplying with $f(x)=y^2$ gives equation~\eqref{refrelation}, but summed up to $2g+2$. The additional terms for $k=2g+1$ and $k=2g+2$ are zero, hence the statement of Lemma~\ref{reflemma} follows.

With the help of equation~\eqref{refrelation} we can now read of\/f the sum of each anti-diagonal of $\Lambda_g$ by comparing the order of $x$, and we f\/ind that these anti-diagonal sums are integer multiples of a~single~$\lambda_j$.
\end{proof}

\begin{Corollary}
For given genus $g$ let $\varkappa$ be of the form~\eqref{kappaAnsatz2}.
Then the $(k-1)$th anti-diagonal sum of $\Lambda_g$ is given as $\Sigma_{g;k}\lambda_k$ with the integer number
\begin{gather*}\Sigma_{g;k}= \frac{N_g}{4g+2} \left[ \frac{1}{2}k(2g+2-k)+\frac{1}{4}(2g+1)\big((-1)^k-1\big) \right]. \end{gather*}
\end{Corollary}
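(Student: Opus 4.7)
The plan is to derive the corollary directly from Lemma~\ref{reflemma} by extracting coefficients in $x$. Substituting $\boldsymbol{u} = y^{-1}\boldsymbol{X}\,\mathrm{d}x$ with $\boldsymbol{X}=(1,x,\ldots,x^{g-1})^T$, as already noted in the paragraph following the lemma, the left-hand side of~\eqref{refrelation} reduces to $\boldsymbol{X}^T\Lambda_g\boldsymbol{X}\,\mathrm{d}x^2$. Expanding this quadratic form and regrouping by power of $x$ gives
\begin{gather*}
\boldsymbol{X}^T\Lambda_g\boldsymbol{X} = \sum_{a,b=1}^{g}(\Lambda_g)_{ab}\,x^{a+b-2} = \sum_{k=2}^{2g}\Bigg(\sum_{\substack{1\le a,b\le g\\ a+b=k}}(\Lambda_g)_{ab}\Bigg)x^{k-2},
\end{gather*}
and the bracketed inner sum is, by definition, the $(k-1)$th anti-diagonal sum of $\Lambda_g$. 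Comparing coefficients of $x^{k-2}$ on the two sides of~\eqref{refrelation} immediately yields the stated formula for $\Sigma_{g;k}$, and in particular shows that each anti-diagonal sum is a (rational) multiple of the single curve parameter~$\lambda_k$.

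It remains to argue that $\Sigma_{g;k}$ is an integer. For this I would invoke the identity $N_g/(2g+1) = \binom{2g+1}{g}/(2g+1) = C_g$, the $g$-th Catalan number, and split on the parity of~$k$. For $k$ even the sign-dependent term vanishes and $\Sigma_{g;k}$ simplifies to $\tfrac14 C_g\,k(2g+2-k)$; both $k$ and $2g+2-k$ are then even, so their product is divisible by~$4$ and the result is integral. For $k=2m+1$ odd, a short algebraic simplification of the bracketed expression (one checks that $(2m+1)(2g+1-2m)-(2g+1) = 4m(g-m)$) reduces the factor to $2m(g-m)$, giving $\Sigma_{g;k}=C_g\,m(g-m)$, which is again an integer.

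No step here is a genuine obstacle: once Lemma~\ref{reflemma} is available, the first part is pure bookkeeping on the quadratic form $\boldsymbol{X}^T\Lambda_g\boldsymbol{X}$, and the anti-diagonal structure of $\Lambda_g$ is forced by the fact that the right-hand side of~\eqref{refrelation} is a polynomial in $x$ of a very specific shape. The only mildly delicate point is the integrality of $\Sigma_{g;k}$, and this becomes transparent after recognising the Catalan-number factor $N_g/(2g+1)=C_g$ and handling the two parity cases for $k$ separately.
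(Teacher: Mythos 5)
Your proof is correct and takes essentially the same route as the paper: the corollary is read off from Lemma~\ref{reflemma} by writing the left-hand side of~\eqref{refrelation} as $\boldsymbol{X}^T\Lambda_g\boldsymbol{X}\,\mathrm{d}x^2$ and comparing coefficients of $x^{k-2}$, which is exactly the "read off the anti-diagonal sums by comparing the order of $x$" step at the end of the paper's combined proof. Your explicit integrality argument via $N_g/(2g+1)=\binom{2g+1}{g}/(2g+1)=C_g$ and the parity split (giving $\tfrac14 C_g\,k(2g+2-k)$ for even $k$ and $C_g\,m(g-m)$ for $k=2m+1$) is a correct refinement of a point the paper merely asserts.
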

Partly, the structure of $\Lambda_g$ is obvious: Apparently, the matrix is symmetric along the main diagonal (because~$\varkappa$ is, too). Furthermore the coef\/f\/icients of the $\lambda_j$ are distributed symmetric also along the main anti-diagonal: $\Sigma_{g;k}$ is symmetric along $k=g+1$.

So far, these statements concern the anti-diagonal sums of $\Lambda_g$. As soon as it comes to the single entries we need another method to arrive at reliable claims. Such a method will be derived in Section~\ref{section6} and exemplif\/ied there in a number of examples. Statements for arbitrary genus are under inspection right now, but still we can extrapolate the structure of $\Lambda_g$: The above assumption of linearity with respect to a single $\lambda_j$ leads to the occurrence of the $\lambda_j$ (without partition numbers) in a Hankel-type structure. The distribution of the partition numbers on the other hand is not trivial. We believe that they are given in terms of combinatorial expressions, but by now their pattern is only partly revealed.

Assuming this structure, we can solve for the partition numbers for a given hyperelliptic curve (and thus for all of f\/ixed genus).
The next cases are
\begin{gather*}
\Lambda_4=\left(\begin{matrix}56\lambda_2&21\lambda_3&6\lambda_4&\lambda_5\\
21\lambda_3&72\lambda_4&27\lambda_5&6\lambda_6\\
6\lambda_4&27\lambda_5&72\lambda_6&21\lambda_7\\
\lambda_5&6\lambda_6&21\lambda_7&56\lambda_8
\end{matrix}\right),\qquad
\Lambda_5=\left( \begin{matrix}
210\lambda_2&84\lambda_3&28\lambda_4&7\lambda_5&\lambda_6\\
84\lambda_3&280\lambda_4&119\lambda_5&38\lambda_6&7\lambda_7\\
28\lambda_4&119\lambda_5&300\lambda_6&119\lambda_7&28\lambda_8\\
7\lambda_5&38\lambda_6&119\lambda_7&280\lambda_8&84\lambda_9\\
\lambda_6&7\lambda_7&28\lambda_8&84\lambda_9&210\lambda_{10}
\end{matrix}\right).
\end{gather*}

Further inspection of the matrices lead to the observation: The $k$th entry of the f\/irst row (column) is $\binom{2g+1-k}{g-k}\lambda_{1+k}$ and this row's coef\/f\/icients sum to $\binom{2g+1}{g-1}$. We remark that for odd curves it was shown in \cite{ehkkl11} on examples $g=2,3,4$ that this fact follows from the solution of the Jacobi inversion problem in terms of Klein--Weierstrass $\wp_{i,j}$-functions.

\section[Formulae for entries of $\Lambda_g$]{Formulae for entries of $\boldsymbol{\Lambda_g}$}\label{section6}
So far the full description of the matrix $\Lambda_g$ for arbitrary genus is still open. The main problem within the method presented here is that in equation~\eqref{kw} the vectors $\boldsymbol u$, which appear in the quadratic form $\boldsymbol{ u}^T\varkappa \boldsymbol{u}$, are evaluated at the same point and hence don't lead to a full system of equations. In \cite[Lemma~4.2]{ehkkl11} linear equations were derived for~$\Lambda_{g;i,j}^{[\varepsilon]}$ based on Baker's construction of Abelian functions in terms of Klein--Weierstrass functions~$\wp_{i,j} $ (documented in~\cite{bak897,bak907} and more recently by Buchstaber, Enolski and Leykin in~\cite{bel997} and others). In the derivation the constants~$\Lambda_{g;i,j}^{[\varepsilon]}$ appear as values of the $\wp_{i,j}$-functions for even non-singular half-periods relevant for a~partition~$\mathcal{I}_0$.

A general formula for the integer entries of the matrix $\Lambda_g$ was not found there, but it was understood that f\/inding of each such integer in higher genera using the above mentioned equations is an extremely time consuming procedure. Therefore we suggest to combine the derived formula for the anti-diagonal sums and the aforementioned formulae, which permit us to compute only a part of the entries to $\Lambda_g$ and substantially speed up the whole calculations because of that.

Here we present the derivation of a solvable system of linear equation for $\Lambda_{g;i,j}^{[\varepsilon]}$ which is independent of the multi-variable $\sigma$-functions of Baker's theory. Namely we will prove the following

\begin{Proposition}\label{Lambdaequations} Let a hyperelliptic curve of genus $g$ be realized in the form
\begin{gather*} y^2 = \prod_{m=1}^{2g+2} (x-e_m), \qquad e_m\in\mathbb{C},
\end{gather*} and let $\mathcal{I}_0=\{i_1,\ldots,i_{g+1}\}$ be a partition of branching points with $[\varepsilon]$ the associated characteristic. Let $\boldsymbol{X}_i = (1, e_i,\ldots, e_i^{g-1})^T$, $i\in \mathcal{I}_0$, be $g$-vectors.
Then the following formulae are valid
\begin{gather}
\boldsymbol{X}_i^T\Lambda_g^{[\varepsilon]}\boldsymbol{X}_j =-\frac{F(e_i,e_j)}{(e_i-e_j)^2}
 \equiv \sum_{n=0}^{g} e_i^ne_j^n S_{2g-2n}^{(i,j)},
\qquad i\neq j \in \mathcal{I}_0. \label{lambdaeq}
\end{gather}
Here, $F(x,z)$ is the Kleinian $2$-polar, whilst $S_{k}^{(i,j)}$ are the elementary symmetric functions of order~$k$ built in branching points with indices from the set $\{ 1,2,\ldots, 2g+2 \} / \{ i,j \}$.
 \end{Proposition}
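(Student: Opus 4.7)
The plan is to evaluate the Klein--Weierstrass representation~\eqref{Bidifferential} of $\Omega(P,Q)$ at two distinct branching points $P=(e_i,0)$, $Q=(e_j,0)$ with $i\neq j\in\mathcal{I}_0$, substitute the ansatz~\eqref{kappaAnsatz} for $\varkappa$ to expose $\boldsymbol{X}_i^T\Lambda_g^{[\varepsilon]}\boldsymbol{X}_j$, and match the result against an independent theta-side evaluation of the same $\Omega$. The identity~\eqref{lambdaeq} will drop out once the $\theta$-constant contributions on the two sides are shown to cancel.

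First I would record the local behaviour at a branching point: with $\xi_k$ defined by $x=e_k+\xi_k^2$ one finds $y=c_k\xi_k+O(\xi_k^3)$ with $c_k^2=\prod_{m\neq k}(e_k-e_m)$, and therefore $\boldsymbol{u}|_{(e_k,0)}=(2/c_k)\boldsymbol{X}_k\,\mathrm{d}\xi_k$ and $(\mathrm{d}x/y)|_{(e_k,0)}=(2/c_k)\,\mathrm{d}\xi_k$. Plugging these into~\eqref{Bidifferential} with $y=w=0$ annihilates the $2yw$ term and produces
\begin{gather*}
\Omega\bigl((e_i,0),(e_j,0)\bigr)=\frac{\mathrm{d}\xi_i\,\mathrm{d}\xi_j}{c_i\,c_j}\left[\frac{F(e_i,e_j)}{(e_i-e_j)^2}+8\,\boldsymbol{X}_i^T\varkappa\,\boldsymbol{X}_j\right].
\end{gather*}
Substituting~\eqref{kappaAnsatz} for $\varkappa$ rewrites the bracket as $\frac{F(e_i,e_j)}{(e_i-e_j)^2}+\boldsymbol{X}_i^T\Lambda_g^{[\varepsilon]}\boldsymbol{X}_j-4\boldsymbol{X}_i^T\frac{(\Theta_{ab}[\varepsilon])}{\Theta[\varepsilon]}\boldsymbol{X}_j$, so the claim~\eqref{lambdaeq} becomes equivalent to the complementary identity
\begin{gather*}
\Omega\bigl((e_i,0),(e_j,0)\bigr)=-\boldsymbol{u}(e_i,0)^T\frac{(\Theta_{ab}[\varepsilon])}{\Theta[\varepsilon]}\boldsymbol{u}(e_j,0),\qquad i\neq j\in\mathcal{I}_0.
\end{gather*}

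For the theta-side calculation I would invoke Fay's formula $\Omega(P,Q)=\mathrm{d}_P\mathrm{d}_Q\log\theta[\delta]\bigl(\int_Q^P\boldsymbol{v}\bigr)$, valid for any odd non-singular $[\delta]$. At the two branching points $\int_Q^P\boldsymbol{v}$ is the half-period $\boldsymbol{h}=(\boldsymbol{\varepsilon}_i-\boldsymbol{\varepsilon}_j)+\tau(\boldsymbol{\varepsilon}'_i-\boldsymbol{\varepsilon}'_j)$, and one picks $[\delta]$ so that the characteristic shift induced by $\boldsymbol{z}\mapsto\boldsymbol{z}+\boldsymbol{h}$ returns the given even $[\varepsilon]$. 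The theta translation law then writes $\theta[\delta](\boldsymbol{z}+\boldsymbol{h})=e^{A(\boldsymbol{z})}\theta[\varepsilon](\boldsymbol{z})$ with $A$ affine in~$\boldsymbol{z}$, so that the exponential prefactor contributes only terms dropping out of $\mathrm{d}_P\mathrm{d}_Q\log\theta[\delta]$ at $\boldsymbol{z}=0$; evenness of $[\varepsilon]$ kills the remaining first-order pieces through $\Theta_a[\varepsilon]=0$; and a short calculation leaves the second derivative $\theta_{lm}[\varepsilon]/\Theta[\varepsilon]$ contracted with $-v_l(P)v_m(Q)$. Converting to the $\boldsymbol{u}$-basis via $\boldsymbol{V}=(2\omega)^{-1}$ yields exactly $-\boldsymbol{u}(e_i,0)^T(\Theta_{ab}[\varepsilon])/\Theta[\varepsilon]\,\boldsymbol{u}(e_j,0)$, which is the required identity.

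The main obstacle is the $\theta$-bookkeeping of this second half: I need to verify that the prefactor $e^{A(\boldsymbol{z})}$ contributes only terms that are swallowed by $\partial_l\partial_m\log\theta$ (the quadratic-in-$\boldsymbol{n}_2$ piece cancelling against the matching part of $\theta_l\theta_m/\theta^2$, and the linear-in-$\boldsymbol{n}_2$ piece against $\Theta_a[\varepsilon]=0$), and that a suitable odd non-singular $[\delta]$ actually exists for every pair $\{i,j\}\subset\mathcal{I}_0$---an arithmetic parity check on the characteristic sum $[\varepsilon]+[\varepsilon_i]+[\varepsilon_j]$. All of these are classical hyperelliptic $\theta$-identities traceable to Fay's monograph, but their careful alignment with the $\boldsymbol{u}$-conventions used in~\eqref{Bidifferential} and~\eqref{kappaAnsatz} is where the subtle part of the argument lies.
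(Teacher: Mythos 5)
Your proposal is correct, and its second half takes a genuinely different route from the paper for the key step. The first half coincides with the paper's: both evaluate the algebraic (Klein--Weierstrass) form of $\Omega$ at two branching points of $\mathcal{I}_0$, insert the unsummed ansatz~\eqref{kappaAnsatz}, and reduce the claim to the identity $\Omega\bigl((e_i,0),(e_j,0)\bigr)=-\sum_{l,m}\frac{\theta_{lm}[\varepsilon]}{\theta[\varepsilon]}v_l v_m$ (your local expansions and the factor of $8$ are right). The difference is how that identity is established. The paper imports Fay's Corollary~2.12, $R(P,Q|\varepsilon)R(P,Q|-\varepsilon)=\Omega(P,Q)+\sum\partial_l\partial_m\ln\theta[\varepsilon](0)\,v_lv_m$, together with the explicit algebraic Szeg\"o kernel for a hyperelliptic curve; the left-hand side then vanishes at the branch points simply because $y=w=0$, so the hypothesis $i,j\in\mathcal{I}_0$ is used only through the factors $\prod_k(x-e_{i_k})$. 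You instead work directly with the first representation in~\eqref{Bidifferential}, $\Omega=\mathrm{d}_P\mathrm{d}_Q\log\theta[\delta]\bigl(\int_Q^P\boldsymbol{v}\bigr)$, and use the half-period translation law. This is sound: the exponential prefactor is affine in $\boldsymbol{z}$ and is killed by the mixed derivative $\mathrm{d}_P\mathrm{d}_Q$, the first-order terms vanish because the shifted characteristic is even, and the required characteristic $[\delta]=[\varepsilon]-[\varepsilon_i]-[\varepsilon_j]$ corresponds to the $(g-1)$-element set $\mathcal{I}_0\setminus\{i,j\}$, hence is odd and non-singular by the standard hyperelliptic classification --- which is exactly where your argument uses $i\neq j\in\mathcal{I}_0$, and which settles the existence question you flagged. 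What each approach buys: the paper's is shorter once Fay's corollary and the algebraic Szeg\"o kernel are accepted as black boxes, and the crucial vanishing is manifest; yours needs only objects already present in equation~\eqref{Bidifferential} plus classical $\theta$-transformation properties, at the cost of the characteristic bookkeeping you correctly identified as the delicate part. Neither addresses the second identity in~\eqref{lambdaeq} (the expansion in symmetric functions), but the paper explicitly omits that as well, so this is not a gap relative to the paper's own proof.
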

Remark that \eqref{lambdaeq} is analogous to the case of odd curves stated in \cite{ehkkl11} and is backed up with many computer experiments. We omit here the proof of the second identity and place emphasis on the proof of the f\/irst identity, which will be given below.

Writing equations \eqref{lambdaeq} for all possible pairs $\{i,j\} \in \{ i_1,\ldots, i_{g+1} \} $ we get $g(g+1)/2$
equations with respect to $g(g+1)/2$ quantities $\Lambda_{g;i,j}^{[\varepsilon]}$, $1\leq i < j \leq g$. It is convenient to introduce $g(g+1)/2$-vectors,
\begin{align*}
\boldsymbol{\Lambda}_g^{[\varepsilon]} =\left( \begin{matrix} \Lambda_{g;1,1}^{[\varepsilon]} \vspace{1mm}\\ \Lambda_{g;1,2}^{[\varepsilon]}
 \\ \vdots \\ \Lambda_{g;g,g}^{[\varepsilon]}
 \end{matrix} \right),\qquad \boldsymbol{Q}_g^{[\varepsilon]}= \sum_{n=0}^{g} \left(\begin{matrix}
e_{i_1}^{n}e_{i_2}^n S^{(i_1,i_2)}_{2g-2n}\vspace{1mm}\\ e_{i_1}^{n}e_{i_3}^nS^{(i_1,i_3)}_{2g-2n}\\ \vdots \\ e_{i_{g-1}}^{n}e_{i_g}^n
S^{(i_{g-1},i_g)}_{2g-2n}
\end{matrix}\right).
\end{align*}
Due to the symmetry of $\Lambda_g^{[\varepsilon]}$, $\boldsymbol{\Lambda}_g^{[\varepsilon]}$ only includes the entries with ordered indices.

Now the above described equations can be written in the short form
\begin{gather}
M_g^{[\varepsilon]} \boldsymbol{\Lambda}_g^{[\varepsilon]} =\boldsymbol{Q}_g^{[\varepsilon]}, \label{system1}
\end{gather}
where $M_g^{[\varepsilon]}$ is a $ g(g+1)/2\times g(g+1)/2 $-matrix of the following form:
Each row's entry $M_{g; \boldsymbol{\cdot}, k}^{[\varepsilon]}$ belongs to some index $(i,j)$ by the specif\/ic position~$k$ of $\boldsymbol{\Lambda}_{g;i,j}^{[\varepsilon]}$. Now take the matrix
\begin{gather*}\mathcal{M}
=\big(\big[(\boldsymbol{X}_r\cdot\boldsymbol{X}_s^T
+\boldsymbol{X}_s\cdot\boldsymbol{X}_r^T)_{\{r,s\}\in \mathcal{I}_0 }\big]_{i,j}
 \cdot [1-\delta_{i,j}/2]\big)_{i,j=1,\ldots, g(g+1)/2}.\end{gather*} Then we have $M_{g;\;\boldsymbol{\cdot},\; k}^{[\varepsilon]}=\mathcal{M}_{i,j}$. In other words: $\mathcal{M}$ is reshaped to a vector in the
same way as $\Lambda_g^{[\varepsilon]}$ and all these vectors with dif\/ferent $r$ and $s$ constitute to the matrix $M_g^{[\varepsilon]}$.
\begin{Proposition} The linear system~\eqref{system1} with respect to $g(g+1)/2$ entries of the matrix $ \Lambda_g^{[\varepsilon]}$ is solvable.
\end{Proposition}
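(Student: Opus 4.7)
The plan is to show that the $g(g+1)/2 \times g(g+1)/2$ coefficient matrix $M_g^{[\varepsilon]}$ is nonsingular; since the number of equations in \eqref{system1} matches the number of unknowns, this is all that is needed for unique solvability. Equivalently, I would verify that the linear map
\[
\Psi \colon \Lambda \longmapsto \bigl(\boldsymbol{X}_i^T\Lambda\boldsymbol{X}_j\bigr)_{\{i,j\}\subset \mathcal{I}_0,\ i\neq j}
\]
from the $g(g+1)/2$-dimensional space of symmetric $g\times g$ matrices into the equidimensional target $\mathbb{C}^{g(g+1)/2}$ is injective; bijectivity then follows by a dimension count.

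The technical device I would use is to encode every symmetric matrix $\Lambda = (\Lambda_{ab})_{1\leq a,b\leq g}$ by the bi-variate polynomial
\[
P_\Lambda(s,t) = \sum_{a,b=1}^{g} \Lambda_{ab}\, s^{a-1} t^{b-1},
\]
which has bi-degree $(g-1,g-1)$ and satisfies $P_\Lambda(s,t)=P_\Lambda(t,s)$. With $\boldsymbol{X}(t) = \bigl(1, t, \ldots, t^{g-1}\bigr)^T$ one has $\boldsymbol{X}_i^T \Lambda \boldsymbol{X}_j = P_\Lambda(e_i, e_j)$, so the condition $\ker\Psi=\{0\}$ is equivalent to the following polynomial claim: if a symmetric polynomial of bi-degree $(g-1,g-1)$ vanishes at every pair $(e_i,e_j)$ with $i\neq j$ in $\mathcal{I}_0$, then it is identically zero.

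The injectivity claim would then be carried out by two successive Vandermonde-type interpolations. Fix any $j\in\mathcal{I}_0$: the polynomial $s\mapsto P_\Lambda(s,e_j)$ has degree at most $g-1$, yet vanishes at the $g$ distinct values $\{e_i : i\in \mathcal{I}_0\setminus\{j\}\}$, so it vanishes identically in $s$. Letting $j$ vary, for every fixed $s$ the polynomial $t\mapsto P_\Lambda(s,t)$ has degree at most $g-1$ and vanishes at the $g+1$ distinct values $\{e_j : j\in\mathcal{I}_0\}$, so it too is identically zero. Therefore $P_\Lambda\equiv 0$, which forces $\Lambda=0$, and $M_g^{[\varepsilon]}$ is nonsingular.

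I do not expect a genuine obstacle. The only points to watch are (i) the bookkeeping that identifies the symmetric matrix $\Lambda_g^{[\varepsilon]}$ with its vectorized form $\boldsymbol{\Lambda}_g^{[\varepsilon]}$ — precisely the role of the correction factor $[1-\delta_{i,j}/2]$ in the definition of $\mathcal{M}$ — and (ii) the use of the distinctness of the branching points $e_m$, which is a standing hypothesis in the hyperelliptic setup and is what powers both Vandermonde steps.
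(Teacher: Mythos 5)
Your proof is correct, but it takes a genuinely different route from the paper. The paper disposes of the proposition in one line by asserting, as a ``direct calculation'', the closed-form identity $\operatorname{Det} M_g^{[\varepsilon]} = \operatorname{Det}\left(\operatorname{Vandermonde}(e_{i_1},\ldots,e_{i_{g+1}})\right)^{g-1}\neq 0$, from which nonsingularity follows because the branching points are distinct. You instead prove injectivity of the evaluation map on symmetric bidegree-$(g-1,g-1)$ polynomials at the off-diagonal pairs of the $g+1$ nodes, via two univariate interpolation steps; this is a complete and elementary argument where the paper only states a determinant formula without derivation, and it makes transparent that distinctness of the $e_i$ is the sole hypothesis used. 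Note also that your count is tight in a satisfying way: for fixed $j$ you know vanishing only at the $g$ nodes $e_i$ with $i\in\mathcal{I}_0\setminus\{j\}$ (the diagonal value $P_\Lambda(e_j,e_j)$ is not among the data), which exactly matches the degree bound $g-1$, confirming that the system is exactly determined rather than overdetermined. What your approach does not deliver is the explicit value of $\operatorname{Det} M_g^{[\varepsilon]}$ as a power of the Vandermonde determinant, which is a stronger quantitative statement and could matter for assessing the conditioning of the numerical solves the author performs; conversely, your argument generalizes immediately to any set of distinct interpolation nodes without any computation. Both establish the proposition.
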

\begin{proof} Direct calculation shows
\begin{gather*}
\operatorname{Det} M_g^{[\varepsilon]} =\operatorname{Det} \left( \operatorname{Vandermonde} ( e_{i_1},\ldots, e_{i_{g+1}} )\right)^{g-1}\neq 0.
\end{gather*}
Therefore
\begin{gather}
 \boldsymbol{\Lambda}_g^{[\varepsilon]} =\big(M_g^{[\varepsilon]}\big)^{-1}\boldsymbol{Q}_g^{[\varepsilon]}, \label{system2}
\end{gather}
which proves the proposition.
\end{proof}

When $ \boldsymbol{\Lambda}_g^{[\varepsilon]}$ is found according to~\eqref{system2} it remains to sum over all partitions by $g+1$ elements of
$\{ 1, \ldots, 2g+2 \}$ to f\/ind the partition numbers of the anti-diagonal sums we were looking for.

\begin{proof} Now to the proof of Proposition~\ref{Lambdaequations}, which accords with the anonymous referee's suggestion to implement Corollary~2.12 of \cite[p.~28]{fay973}. This corollary represents a remarkable relation between the canonical bi-dif\/ferential $\Omega (P,Q )$ (see equation~\eqref{Bidifferential}) and the Szeg\"o kernel $R\left(P,Q|\boldsymbol{e}\right)$:
\begin{gather} R(P,Q|\boldsymbol{e}) R(P,Q|-\boldsymbol{e}) =\Omega(P,Q) +
 \sum_{i,j=1}^g
 \frac{\partial^2}{\partial z_i\partial z_j} \ln \theta[\boldsymbol{e}] (0) v_i(P)v_j(Q).\label{Cor2.12}
 \end{gather}
Here, $P=(x,y)$, $Q=(z,w)$ are points on the curve, $\boldsymbol{e}$ is a vector for which $\theta(\boldsymbol{e})\neq 0$, and with $E(P,Q)$ being the Schottky--Klein prime-form, the Szeg\"o kernel is given as
\begin{gather*}
R(P,Q|\boldsymbol{e})=\frac{ \theta[\boldsymbol{e}] \left( \int_{P}^Q \boldsymbol{v} \right) }{ \theta[\boldsymbol{e}](0) E(P,Q) }.
\end{gather*}
Our purpose demands $\boldsymbol{e}$ to be a non-singular \emph{even} half period associated to the partition $\mathcal{I}_0=\{ i_1,\ldots, i_{g+1} \}$; we will denote further $[\boldsymbol{e}]=[\varepsilon]$.
Fay states \cite[p.~13]{fay973} an algebraic representation of the Szeg\"o kernel of a hyperelliptic curve associated to non-singular even half-periods:
\begin{gather*}
R(P,Q|\varepsilon)=\frac{ w \prod\limits_{k=1}^{g+1} (x-e_{i_k}) + y \prod\limits_{k=1}^{g+1} (z-e_{i_k}) }{2(x-z)}
\left[ \frac{\mathrm{d}x\mathrm{d} z}{yw \prod\limits_{k=1}^{g+1} (x-e_{i_k})(z-e_{i_k})} \right]^{1/2}.
\end{gather*}
Note that in \eqref{Cor2.12} all terms are evaluated at two points $P$ and $Q$ and therefore presumably deliver a full system. Moreover, for even half periods the second logarithmic derivative of $\theta$ reduces to~$\theta''/\theta$. Together, \eqref{Cor2.12} takes the form{\samepage
\begin{gather}
 \frac{\left( w \prod\limits_{k=1}^{g+1} (x-e_{i_k}) + y \prod\limits_{k=1}^{g+1} (z-e_{i_k})\right)^2 }{4(x-z)^2} \frac{1}{ \prod\limits_{k=1}^{g+1} (x-e_{i_k})(z-e_{i_k})}\frac{\mathrm{d} x \mathrm{d} z}{ yw } \nonumber\\
\qquad{}=\frac{F(x,z)+2yw}{4(x-z)^2}\frac{\mathrm{d} x \mathrm{d} z}{ yw }+2\sum_{i,j=1}^g \varkappa_{ij}u_i(P)u_{j}(Q)+\sum_{i,j=1}^g \frac{\theta_{i,j}[\varepsilon]}{\theta[\varepsilon]} v_i(P)v_j(Q),
 \label{relation}
\end{gather}
where the algebraic representation of the canonical bi-dif\/ferential was used.}

As it was worked out in Sections~\ref{section4} and~\ref{section5}, $\varkappa$ has the following form
\begin{gather*}
\varkappa=-\frac{1}{2} \frac{1} {\Theta[\varepsilon]}
\left( \begin{matrix} \Theta_{1,1}[\varepsilon]& \Theta_{1,2}[\varepsilon]&\ldots
 &\Theta_{1,g}[\varepsilon] \\
\vdots&\vdots&\ldots&\vdots\\
\Theta_{1,g}[\varepsilon]& \Theta_{2,g}[\varepsilon]&\ldots &\Theta_{g,g}[\varepsilon]
 \end{matrix} \right)+ \frac18 \Lambda_g^{[\varepsilon]}.
\end{gather*}
This is the ``unsummed'', characteristic dependent representation \eqref{kappaAnsatz} of~$\varkappa$, hence the mat\-rix~$\Lambda_g$ has the index~$[\varepsilon]$, indicating it consists of symmetric functions like it was in equations~\eqref{example2} and~\eqref{example3}.

Because of the representation of $\varkappa$ in this form, the last two terms on the right-hand side of equation~\eqref{relation} are reduced to
\begin{gather*} 2\sum_{i,j=1}^g \varkappa_{i,j}u_i(P)u_{j}(Q)
+\frac{1}{ \theta[\varepsilon ]} \sum_{i,j=1}^g \frac{\partial^2}{\partial z_i\partial z_j}
 \left. \theta[\varepsilon](\boldsymbol{z})\right\vert_{\boldsymbol{z}=0} v_i(P)v_j(Q) =
\frac1{4}\sum_{i,j=1}^g \Lambda_{g;ij}^{[\varepsilon]}u_i(P)u_{j}(Q).\end{gather*}

Using $y(e_r)=w(e_s)=0$ and the divisibility of $F(e_r,e_s)$ by $(e_r-e_s)^2$ stated above we can write~\eqref{relation} in the form
\begin{gather}
 \frac{\left( w \prod\limits_{k=1}^{g+1} (x-e_{i_k}) + y \prod\limits_{k=1}^{g+1} (z-e_{i_k})\right)^2 }{4(x-z)^2} \frac{1}{ \prod\limits_{k=1}^{g+1} (x-e_{i_k})(z-e_{i_k})}\frac{\mathrm{d} x \mathrm{d} z}{ yw } \nonumber\\
\qquad {} =-\frac14\sum_{n=0}^{g} e_r^ne_s^n S_{2g-2n-1}^{(r,s)}\frac{\mathrm{d}x\mathrm{d}z}{yw}+\frac1{4}\sum_{i,j=1}^g \Lambda_{g;r,s}^{[\varepsilon]}u_i(P)u_{j}(Q).\label{relation1}
\end{gather}

Expanding $x=e_r+\xi^2$, $z=e_s+\xi^2$, $\xi\sim 0$, in the vicinity of the branching points, the left-hand side of \eqref{relation1} vanishes up to order~$\xi^2$ whilst the right-hand side leads to the equation
claimed in Proposition~\ref{lambdaeq}
\begin{gather*}
\sum_{i,j=1}^g \Lambda_{g;i,j}^{[\varepsilon]} e_r^{i-1}e_s^{j-1} = \sum_{n=0}^{g} e_r^ne_s^n S_{2g-2n}^{(r,s)}.\tag*{\qed}
\end{gather*}
\renewcommand{\qed}{}
\end{proof}

\begin{Example} Let $g=3$. In this case $\Lambda_3$ can easily be found by using the formula for anti\-dia\-gonal sums and our knowledge of the f\/irst row's structure. Hence we can check the correctness of the equations~\eqref{system1}. To do that we f\/irst f\/ix the partition
$\mathcal{I}_0=\{1,2,3,4\}$ and denote as $[\varepsilon]$ the corresponding characteristic.
Equations~\eqref{system1} are
\begin{gather}
 \left(\begin{matrix} 1&e_1+e_2&e_1^2+e_2^2&e_1e_2&e_1e_2(e_1+e_2)&e_1^2e_2^2\\
 1&e_1+e_3&e_1^2+e_3^2&e_1e_3&e_1e_3(e_1+e_3)&e_1^2e_3^2\\
 1&e_1+e_4&e_1^2+e_4^2&e_1e_4&e_1e_4(e_1+e_4)&e_1^2e_4^2\\
 1&e_2+e_3&e_2^2+e_3^2&e_2e_3&e_2e_3(e_2+e_3)&e_2^2e_3^2\\ 1&e_2+e_4&e_2^2+e_4^2&e_2e_4&e_2e_4(e_2+e_4)&e_2^2e_4^2\\ 1&e_3+e_4&e_3^2+e_4^2&e_3e_4&e_3e_4(e_3+e_4)&e_3^2e_4^2
 \end{matrix} \right) \left( \begin{matrix}
 \Lambda_{3;1,1}^{[\varepsilon]}\\ \Lambda_{3;1,2}^{[\varepsilon]}\\ \Lambda_{3;1,3}^{[\varepsilon]}\\ \Lambda_{3;2,2}^{[\varepsilon]}\\ \Lambda_{3;2,3}^{[\varepsilon]}\\ \Lambda_{3;3,3}^{[\varepsilon]}
 \end{matrix} \right)= \left( \begin{matrix}
 Q_{1,2}^{[\varepsilon]}\\ Q_{1,3}^{[\varepsilon]}\\ Q_{1,4}^{[\varepsilon]}\\ Q_{2,3}^{[\varepsilon]}\\ Q_{2,4}^{[\varepsilon]}\\ Q_{3,4}^{[\varepsilon]}
 \end{matrix} \right),\label{system1g3}
\end{gather}
where
\begin{gather*}
Q_{i_k,i_l}=e_{i_k}^3e_{i_l}^3 + S_2^{(i_k,i_l)}e_{i_k}^2e_{i_l}^2 + S_4^{(i_k,i_l)}e_{i_k}e_{i_l}+ S_6^{(i_k,i_l)}.
\end{gather*}
$\operatorname{Det} M_{3}^{[\varepsilon]} = \operatorname{Vandermonde}
(e_1,e_2,e_3,e_4)^2\neq 0$ and therefore the system \eqref{system1g3} is solvable. In particular,
\begin{gather*} \Lambda_{3;3,3}^{[\varepsilon]}=e_1e_2+e_3e_4+e_5e_6+e_7e_8+(e_1+e_2)(e_3+e_4)+(e_5+e_6)(e_7+e_8). \end{gather*}
The other entries of $\Lambda_{3}^{[\varepsilon]}$ can be derived in the same way. Summing over all $( {}^{8}_4 )=70$ partitions we see that $\Lambda_3$ indeed has the structure as it was claimed in~\eqref{kappa3}.
\end{Example}

\begin{Example} Consider now $\Lambda_5$. Using the anti-diagonal sums, the symmetry and the known structure of the f\/irst row
and column we f\/ind
\begin{gather*}
 \Lambda_5=\left(
\begin{matrix}
210\lambda_2&84\lambda_3&28\lambda_4&7\lambda_5&\lambda_6\\
84\lambda_3&280\lambda_4&119\lambda_5&X\lambda_6&7\lambda_7\\
28\lambda_4&119\lambda_5&Y\lambda_6&119\lambda_7&28\lambda_8\\
7\lambda_5&X\lambda_6&119\lambda_7&280\lambda_8&84\lambda_9\\
\lambda_6&7\lambda_7&28\lambda_8&84\lambda_9&210\lambda_{10}
\end{matrix}
\right).
\end{gather*}
To complete the calculations it is enough to f\/ind the integers $X$ and~$Y$. To calculate $X$ f\/ix a~partition, e.g.,
$ \mathcal{I}_0= \{1,2,3,4,5,6 \}$, and denote with $s_j$ the elementary symmetric functions of the branching
points with indices from the set $\mathcal{I}_0$ and denote $\widetilde{s}_j$ the same functions from
the complementary set. Solving equations~\eqref{system1} for this partition we f\/ind
\begin{gather*} X = 2s_6+2\widetilde{s}_6+s_1\widetilde{s}_5+s_5\widetilde{s}_1. \end{gather*}
Summation over all 924 partitions leads to the previously found value~$38\lambda_6$. To f\/ind $Y$ we can use the
anti-diagonal sum: $2+2X+Y=378$.

This example demonstrates the advantage of combining the methods of Sections~\ref{section5} and~\ref{section6}: instead of summing over~924 partitions of all 15 solutions of system~\eqref{system1} it is enough to do that only with one from them, $\Lambda_{2,4}$.
\end{Example}

\begin{Example} Proceeding as described in this section we found
\begin{gather*} \Lambda_6= \left( \begin{matrix}
729\lambda_2&330\lambda_3&120\lambda_4&36\lambda_5&8\lambda_6&\lambda_7\\
330\lambda_3&1080\lambda_4&492\lambda_5&184\lambda_6&51\lambda_7&8\lambda_8\\
120\lambda_4&492\lambda_5&1200\lambda_6&542\lambda_7&184\lambda_8&36\lambda_9\\
36\lambda_5&184\lambda_6&542\lambda_7&1200\lambda_8&492\lambda_9&120\lambda_{10}\\
8\lambda_6&51\lambda_7&184\lambda_8&492\lambda_9&1080\lambda_{10}&330\lambda_{11}\\
\lambda_7&8\lambda_8&36\lambda_9&120\lambda_{10}&330\lambda_{11}&792\lambda_{12}
 \end{matrix} \right). \end{gather*}
\end{Example}

\section{Choice of basis}\label{section7}

\looseness=-1
In this paper we used a basis due to Baker for a $2g$-dimensional space of singular cohomologies. Our results were derived and numerically tested within this basis. But there is a certain freedom of choice for this basis, namely we are able to add a linear combination of holomorphic dif\/ferentials to the meromorphic dif\/ferentials without changing the Legendre relation. As a~result $\varkappa$ changes by a certain matrix (see \cite[p.~328]{bak897}): If $\boldsymbol{r}\rightarrow \boldsymbol{r}-C\cdot \boldsymbol{u}$ with a symmetric $g\times g$ mat\-rix~$C$, then $\varkappa\rightarrow\varkappa +C/2$. Consequently, if we f\/ix $C=-\frac1{4}\frac{1}{N_g}\Lambda_g$, our main result will change to
\begin{gather}
\varkappa= -\frac1{2}\frac1{N_g}\sum_{N_g [\varepsilon]}\frac{(\Theta_{ab}[\varepsilon])_{a,b=1,\ldots,g}}{\Theta[\varepsilon]}.
\label{Kleinian}
\end{gather}
Of course, for this new basis of meromorphic dif\/ferentials, one has to know explicitly the mat\-rix~$\Lambda_g$. The change of basis is in that sense only a~reformulation of Theorem~\ref{maintheorem}. But it is important to us to point out the existence of such a basis.

Formula \eqref{Kleinian} f\/irst appears in F.~Klein~\cite{klein886,klein888}, then it was recently revisited in a more general context by Korotkin and Shramchenko~\cite{ksh12}. Let us call a cohomology basis $\mathcal{B}''= \{u_1'',\ldots,u_g'',r_1'',\ldots,r_g'' \}$
 a {\it Klein basis}, if the associated $\varkappa$-matrix has a representation of the form~\eqref{Kleinian}.

Our development results in the following
\begin{Proposition}
$\varkappa$ is of the form \eqref{Kleinian} in the Klein cohomology basis
\begin{gather*}
u_i''=u_i,\qquad r_i''=r_i-\sum_j C_{ij}u_j,
\end{gather*}
with $C=-\frac{1}{4}\frac{1}{N_g}\Lambda_g$ and $\Lambda_g$ as in Theorem~{\rm \ref{maintheorem}}.
\end{Proposition}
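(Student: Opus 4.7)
The plan is to deduce this proposition directly from Theorem~\ref{maintheorem} using the transformation rule for $\varkappa$ cited just above the statement, namely that replacing $\boldsymbol{r}$ by $\boldsymbol{r}-C\boldsymbol{u}$ with a symmetric $g\times g$ matrix $C$ preserves the generalized Legendre relation and shifts $\varkappa\mapsto \varkappa+C/2$. The whole argument then reduces to three checks: that the prescribed $C=-\frac{1}{4}\frac{1}{N_g}\Lambda_g$ is symmetric, that the new $2g$-tuple $\{u_i,\,r_i''\}$ is still a valid basis of the singular cohomology, and that the induced shift cancels the rational $\Lambda_g$-term in~\eqref{kappaAnsatz2}.

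First I would verify the symmetry of $\Lambda_g$. Symmetry of $\varkappa$ is forced by the first Legendre relation $\eta^T\omega=\omega^T\eta$, and the Hessian term in \eqref{kappaAnsatz2} is manifestly symmetric in $a,b$, so $\Lambda_g$ must be symmetric as well. This is consistent with the explicit tables for $\Lambda_4$, $\Lambda_5$, $\Lambda_6$ displayed in the previous section. The basis check is immediate: subtracting a holomorphic linear combination from $r_i$ does not alter its polar divisor, so each $r_i''$ remains a differential of the second kind with the correct singular behaviour, and the transformation matrix acting on $(u_i,r_i)$ is block-triangular with identity blocks on the diagonal, hence invertible on cohomology.

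With these in hand, applying the transformation law to the formula of Theorem~\ref{maintheorem} gives
\begin{gather*}
\varkappa \;\longmapsto\; \varkappa+\tfrac{1}{2}C = \frac{1}{8}\frac{1}{N_g}\Lambda_g - \frac{1}{2}\frac{1}{N_g}\sum_{N_g [\varepsilon]}\frac{(\Theta_{ab}[\varepsilon])_{a,b=1,\ldots,g}}{\Theta[\varepsilon]} - \frac{1}{8}\frac{1}{N_g}\Lambda_g,
\end{gather*}
and the rational contribution cancels identically, leaving precisely the expression~\eqref{Kleinian} claimed for $\varkappa$ in the Klein basis.

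The only step with any real content is the justification of the cited transformation rule itself: one must verify that imposing $C=C^T$ produces exactly $\eta\mapsto\eta+C\omega$ and $\eta'\mapsto\eta'+C\omega'$ upon integrating the new $r_i''$ around the $\mathfrak{a}$- and $\mathfrak{b}$-cycles, and then confirm that all three parts of the generalized Legendre relation are preserved under this shift. This is a direct bookkeeping calculation on period integrals using Baker's explicit basis~\eqref{Baker}; since the statement is cited as a known fact from \cite[p.~328]{bak897}, the rest of the proposition is purely an algebraic substitution into~\eqref{kappaAnsatz2}.
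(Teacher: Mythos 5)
Your proposal is correct and follows essentially the same route as the paper: the paper likewise obtains the proposition by citing Baker's transformation rule $\boldsymbol{r}\mapsto\boldsymbol{r}-C\boldsymbol{u}$, $\varkappa\mapsto\varkappa+C/2$ for symmetric $C$, and then substituting $C=-\tfrac{1}{4}\tfrac{1}{N_g}\Lambda_g$ into the formula of Theorem~\ref{maintheorem} so that the rational term cancels. Your additional checks (symmetry of $\Lambda_g$ and that the new differentials still form a valid second-kind basis) are sound and consistent with what the paper leaves implicit.
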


In this note we presented an explicit construction of a special $2g$-dimensional cohomology basis with periods $(2\omega,2\omega',2\eta,2\eta')$ satisfying the generalized Legendre relation. In this basis the symmetric matrix $\varkappa=\eta(2\omega)^{-1}$ takes a simple form. $\varkappa$ is important for def\/ining the multi-variable $\sigma$-function, and therefore for resolving of the Jacobi--Riemann inversion problem and description of the Jacobi variety of associated curves.

Remarkably, in the special case of $g=2$ Baker presented the matrix $\Lambda_2$ (see \cite[p.~90]{bak907}) in his construction of $\sigma$-expansions. Our result can be seen as a generalization of these results to higher genera hyperelliptic curves. But as he notes there, taking $\varkappa$ in its original form (in the \emph{Baker basis}) leads to simplif\/ications of associated dif\/ferential equations of the $\wp$-functions. Taking this into account, one can choose one's basis by means which is more convenient in the given purpose. We plan to elucidate the nature of these simplif\/ications in more detail in a~subsequent work.

\subsection*{Acknowledgements}
The author is grateful to V.~Enolski for useful discussion and constant interest to the work, and also to all referees, whose comments promoted a further improvement of the text. In especially the author wants to mention the contribution of the anonymous referee, who reported formula~\eqref{refrelation} and reminded us of Fay's Corollary 2.12~\cite{fay973}, which essentially improved our initial statements.
Also the author gratefully acknowledges the Deutsche Forschungsgemeinschaft (DFG) for f\/inancial support within the framework of the DFG Research Training group 1620 Models of gravity.

\pdfbookmark[1]{References}{ref}
\LastPageEnding

\end{document}